\newcommand{\lla}{\left\langle}
\newcommand{\rra}{\right\rangle}
\newcommand{\s}{{\mathbf s}}
\renewcommand{\epsilon}{\varepsilon}
\newcommand{\dist}{{\operatorname{dist}}}
\newcommand{\sm}{\smallsetminus}
\newcommand{\szego}{Szeg\H{o} }
\newcommand{\inv}{^{-1}}
\newcommand{\kahler}{K\"ahler }
\newcommand{\wh}{\widehat}
\newcommand{\PP}{{\mathbb P}}
\newcommand{\R}{{\mathbb R}}
\newcommand{\C}{{\mathbb C}}
\newcommand{\Z}{{\mathbb Z}}
\newcommand{\CP}{\C\PP}
\renewcommand{\d}{\partial}
\newcommand{\dbar}{\bar\partial}
\newcommand{\ddbar}{\partial\dbar}
\newcommand{\vol}{{\operatorname{Vol}}}
\renewcommand{\phi}{\varphi}
\newcommand{\ccal}{\mathcal{C}}
\newcommand{\hcal}{\mathcal{H}}
\newcommand{\ocal}{\mathcal{O}}
\newcommand{\al}{\alpha}
\newcommand{\be}{\beta}
\newcommand{\ga}{\gamma}
\newcommand{\Ga}{\Gamma}
\newcommand{\la}{\lambda}
\newcommand{\ep}{\varepsilon}
\newcommand{\de}{\delta}
\newcommand{\De}{\Delta}
\newcommand{\om}{\omega}
\newtheorem{theo}{{\sc Theorem}}[section]
\newtheorem{cor}[theo]{{\sc Corollary}}
\newtheorem{lem}[theo]{{\sc Lemma}}
\newtheorem{prop}[theo]{{\sc Proposition}}
\newenvironment{rem}{\medskip\noindent{\it Remark:\/} }{\medskip}
\title[Uniformly bounded orthonormal sections]
{Uniformly bounded orthonormal sections of positive line bundles on complex manifolds}
\author{Bernard Shiffman}
\address{Department of Mathematics, Johns Hopkins University, Baltimore, MD
21218, USA} \email{shiffman@math.jhu.edu}
\thanks{Research  partially supported by NSF grant DMS-1201372.}
\dedicatory{To  Duong H. Phong  on the occasion of his 60th birthday}
\date{April 5, 2014}
\begin{document}

\begin{abstract}
We show the existence of uniformly bounded sequences of increasing numbers of orthonormal sections of powers $L^k$ of a positive holomorphic line bundle $L$ on  a compact K\"ahler manifold $M$. In particular, we construct for each positive integer $k$, orthonormal sections $s^k_1,\dots,s^k_{n_k}$ in $H^0(M,L^k)$, $n_k\ge\beta\,\dim H^0(M,L^k)$, such that $\{s^k_j\}$ is a uniformly bounded family, where $\beta$ is an explicit positive constant depending only on the dimension of $M$.  For $m=1$, we can take $\beta=.99564$. 
\end{abstract}

\maketitle


\section{Introduction} In \cite{Bo1}, Bourgain constructed a uniformly bounded orthonormal basis for the Hilbert space of holomorphic polynomials on the 3-sphere $S^3\subset \C^2$. An open question is whether a bounded basis exists in higher dimensions, i.e., on $S^{2m+1}\subset \C^{m+1}$ for $m\ge 2$, or more generally on the boundary of a relatively compact strictly pseudoconvex domain $D$ in a complex manifold $Y$.  An important case, which in fact drives this research, is where $Y$ is the dual bundle $L\inv$  of a positive line bundle $L$ over a compact \kahler manifold $M$, and $\d D\to M$ is the circle bundle consisting of elements of $L\inv$ of length 1; we then seek a bounded orthonormal basis for the space $\hcal^2(\d D)$ of CR holomorphic funcions on $\d D$. Specifically, we identify  $\hcal^2(\d D)$ with the direct sum $\bigoplus_{k=0}^\infty H^0(M,L^k)$ of holomorphic sections of powers of $L$ (see Section~\ref{background}), and we conjecture that there exists a uniformly bounded sequence of orthonormal bases for the spaces $H^0(M,L^k)$, $k\in\Z^+$. Indeed, if $L\to M=\CP^m$ is the hyperplane section bundle, then $\d D = S^{2m+1}$, and the Hilbert space  $\hcal^2(\d D)$ is the $L^2$ completion of the space of polynomials on $\C^{m+1}$ restricted to $S^{2m+1}$. In this case, $H^0(\CP^m,L^k)$ is the space of  homogeneous holomorphic polynomials of degree $k$ on $\C^{m+1}$.

In this paper, we give a partial answer to the question of the existence of uniformly bounded orthonormal bases: 

\begin{theo} \label{main} Let $(L,h)\to (M,\om)$ be a Hermitian holomorphic line bundle over a compact \kahler manifold, with positve curvature $\Theta_h$ and \kahler form  $\om = \frac i2\Theta_h$. Then there exist positive constants $C, \be$  such that for each positive integer $k$, we can find sets of orthonormal holomorphic sections
$$s^k_1,\dots,s^k_{n_k} \in H^0(M,L^k),\qquad n_k \ge \be\dim H^0(M,L^k)\,,$$
such that $ \|s^k_j\|_\infty \le C$ for $1\le j\le n_k$, for all $k\in\Z^+$. \end{theo}

The inner product on $H^0(M,L^k)$ is the $L^2$ inner product induced from the metric $h^k$  on $L^k$ and the volume form $\frac 1{m!}\om^m$ on $M$, where we let $m=\dim M$; see \eqref{induced}.
The sup-norm is given by $ \|s^k\|_\infty=\sup_{z\in M} |s^k(z)|_{h^k(z)}$, for $s^k\in H^0(M,L^k)$.
We recall that \begin{equation}\label{RR} \dim H^0(M,L^k)= \frac 1{m!} c_1(L)^m k^m +O(k^{m-1})\,,\end{equation} by the Riemann-Roch Theorem and Kodaira Vanishing Theorem, so $n_k$ grows at the rate $k^m$.

To compare Theorem \ref{main} with known results, we note that the author and Zelditch showed in \cite{SZlevy} that for $2<p<\infty$, random sections in $H^0(M,L^k)$ of unit $L^2$ norm satisfy a uniform $L^p$ bound independent of $k$ except for rare events of probability $<\exp(-Ck^{2m/p})$. Thus randomly chosen sequences of orthonormal bases will almost surely have uniform $L^p$ bounds for all $p<\infty$.  But random sections  in $H^0(M,L^k)$  of unit $L^2$ norm will have $L^\infty$ norms approximately equal to $\sqrt{m\,\log k}$ with high probability \cite{FZ} (see also \cite{SZlevy}), so random sequences will almost surely not be uniformly bounded.

We shall also  give  explicit positive constants $\be_m$ depending only on the dimension $m$ of $M$ such that  Theorem~\ref{main} holds for all $\be<\be_m$. (See Theorem~\ref{beta}.) For example, for $\dim M=1$, there exist uniformly bounded orthonormal sections $s^k_1,\dots,s^k_{n_k} \in H^0(M,L^k)$ with
$$n_k \ge (.99564)\dim H^0(M,L^k)\,,\quad \mbox{for all }\ k\in\Z^+\,.$$

Theorem \ref{beta} also gives upper bounds for $\limsup_{k\to\infty}\left[ \max_{1\le j\le n_k}\|s^k_j\|_\infty\right]$.  The following result is a consequence of Theorem~\ref{beta}:

\begin{theo} \label{cor1} Let $(L,h)\to (M,\om)$ be as in Theorem \ref{main}.  Then there exists a sequence of holomorphic sections $s^k\in H^0(M,L^k)$, $k=1,2,3,\dots$, such that $\|s^k\|_2=1$ for all $k\in\Z^+$ and
$$\limsup_{k\to\infty} \|s^k\|_\infty\le \kappa_m\,\vol(M)^{-1/2}\,,$$ where $\kappa_m$ is a universal constant depending only on  $m=\dim M$.
\end{theo}

For a continuous section $\psi\in\ccal^0(M,L^k)$, one trivially has $\|\psi\|_\infty/\|\psi\|_2 \ge  \vol(M)^{-1/2}$, with equality if and only if $|\psi|_{h^k}$ is constant. Thus, the constant $\kappa_m$ can be regarded as a measure of the asymptotic ``flatness" of the sections $\{s^k\}$.

Applying Theorem \ref{cor1} to the case where $M=\CP^m$ and $L$ is the hyperplane section bundle $\ocal(1)$ with the Fubini-Study metric, so that $H^0(\CP^m,L^k)$ can be identified with the space of homogeneous holomorphic polynomials  of degree $k$ on $\C^{m+1}$, with the $L^p$ norm of a section given by the $L^p$ norm of the corresponding polynomial over the unit sphere $S^{2m+1}\subset \C^{m+1}$, we obtain the following result: 

\begin{cor} \label{corpoly} For all $m\ge 1$, there exists a sequence of homogeneous holomorphic polynomials $p_k$  on $\C^{m+1}$ such that $\deg p_k=k$ and  $$\sup_k \frac{\|p_k\|_{L^\infty(S^{2m+1})}}{\|p_k\|_{L^2(S^{2m+1})}}<+\infty\,.$$\end{cor}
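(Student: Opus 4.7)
The plan is to derive Corollary \ref{corpoly} as a direct application of Theorem \ref{cor1} in the case $(M,\om) = (\CP^m, \om_{\FS})$ and $(L,h) = (\ocal(1), h_{\FS})$, combined with the standard dictionary between global sections of $\ocal(k)$ and homogeneous polynomials. First I would invoke Theorem \ref{cor1} to produce a sequence $s^k \in H^0(\CP^m, \ocal(k))$ with $\|s^k\|_2 = 1$ and $\limsup_{k\to\infty}\|s^k\|_\infty \le \kappa_m\,\vol(\CP^m)^{-1/2}$.

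Next I would translate these sections into polynomials. A homogeneous polynomial $p$ of degree $k$ on $\C^{m+1}$ defines a section $s_p \in H^0(\CP^m,\ocal(k))$ with
\[
|s_p([z])|_{h_{\FS}^k} \;=\; \frac{|p(z)|}{|z|^k}, \qquad z\in \C^{m+1}\sm\{0\}.
\]
For $z \in S^{2m+1}$ we have $|z|=1$, so $|s_p([z])|_{h^k}=|p(z)|$; since $|p|$ is $S^1$-invariant along the Hopf fibers, passing to the quotient gives $\|s_p\|_\infty = \|p\|_{L^\infty(S^{2m+1})}$. For the $L^2$ norm, the fact that $\pi_*\!\left(d\sigma/(2\pi)\right) = \om_{\FS}^m/m!$ (up to the appropriate constant fixed by our normalization of $\om_{\FS}$) together with $S^1$-invariance of $|p|^2$ yields
\[
\|s_p\|_{L^2(M)}^2 \;=\; c_m \,\|p\|_{L^2(S^{2m+1})}^2
\]
for some positive constant $c_m$ depending only on $m$.

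Setting $p_k$ to be the homogeneous polynomial corresponding to $s^k$, the two identities give
\[
\frac{\|p_k\|_{L^\infty(S^{2m+1})}}{\|p_k\|_{L^2(S^{2m+1})}} \;=\; \sqrt{c_m}\,\frac{\|s^k\|_\infty}{\|s^k\|_2} \;=\; \sqrt{c_m}\,\|s^k\|_\infty,
\]
and the right-hand side is bounded in $k$ by Theorem \ref{cor1}. This yields the claimed uniform bound. The only step that requires any verification is the $L^2$ identity between the Fubini--Study integral on $\CP^m$ and the spherical integral on $S^{2m+1}$, but this is classical and follows immediately from integrating over the fibers of the Hopf map; no genuine obstacle arises, as the result is essentially a reformulation of Theorem \ref{cor1} under the standard polynomial/section correspondence.
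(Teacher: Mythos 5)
Your proposal is correct and takes essentially the same approach as the paper: the paper also obtains Corollary~\ref{corpoly} by specializing Theorem~\ref{cor1} to $M=\CP^m$ with $L=\ocal(1)$ and the Fubini--Study metric, invoking the standard identification of $H^0(\CP^m,\ocal(k))$ with degree-$k$ homogeneous polynomials and of the $L^p$ norms of sections with the corresponding $L^p$ norms on $S^{2m+1}$. You merely make explicit (via the Hopf fibration) the norm comparison that the paper states without comment; the constant $c_m$ you flag is harmless since it cancels in the bound.
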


The result of Bourgain \cite{Bo1} mentioned at the beginning of this paper gives uniformly bounded sequences of orthonormal bases for the spaces $H^0(\CP^1,L^k)$ (which we identify with the spaces of holomorphic homogeneous polynomials on $\C^2$). These bases are of the form
\begin{equation}\label{osc}s^k_j=\frac 1{\sqrt{k+1}}\sum_{q=0}^{k}e^{2\pi ijq/(k+1)}\,\sigma_q\, \chi^k_q\,, \quad 1\le j \le k+1\,,\end{equation}
where $\chi^k_0,\chi^k_1,\dots,\chi^k_k$ are the $L^2$ normalized monomials in $H^0(\CP^1,L^k)$, and $\sigma_q=\pm 1$. (A deep part of the argument in \cite{Bo1} is to choose the signs of the $\sigma_q$ appropriately to obtain uniform bounds.)  Our method is to begin with coherent states peaked at ``lattice points" in the manifold $M$ in place of the monomials $\chi^k_q$ which peak along circles in $\CP^1$.  While the monomials are orthogonal, the coherent states are only approximately orthogonal. So we then modify the coherent states to make them orthogonal before constructing our orthonormal sections $s_j^k$ as oscillating sums of the form \eqref{osc} (but without the $\sigma_q$).

Corollary \ref{corpoly} implies the following result on spherical harmonics:

\begin{cor} \label{coreigen} Let $m\ge 1$, and let $\De$ denote the Laplacian on the round sphere $S^{2m+1}$. Then there exists a sequence of (real) eigenfunctions $f_k$ such that $\Delta f_k=\la_k f_k$, where $\la_k=k(k+2m)$ denotes the $k$-th eigenvalue of $\De$,  and  $$\sup_k \frac{\|f_k\|_{L^\infty(S^{2m+1})}}{\|f_k\|_{L^2(S^{2m+1})}}<+\infty\,.$$\end{cor}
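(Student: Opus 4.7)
The plan is to deduce Corollary \ref{coreigen} directly from Corollary \ref{corpoly} by recognizing the homogeneous holomorphic polynomials $p_k$ produced there as eigenfunctions of the round Laplacian on $S^{2m+1}$, and then extracting a real-valued part without losing too much of the $L^2$ mass.

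First, I recall the standard fact that a holomorphic function on $\C^{m+1}$ is automatically harmonic for the flat Laplacian on $\R^{2m+2}$, since $\De = 4\sum_{j=0}^m \d_{z_j}\d_{\bar z_j}$ annihilates holomorphic functions. Thus each $p_k$ is a harmonic homogeneous polynomial of degree $k$ on $\R^{2m+2}$, and by the classical Euler-type identity for spherical harmonics on $S^{n-1}\subset\R^n$ (with $n=2m+2$), its restriction to $S^{2m+1}$ is an eigenfunction of $\De_{S^{2m+1}}$ with eigenvalue $k(k+n-2) = k(k+2m)=\la_k$.

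Second, to obtain a \emph{real-valued} eigenfunction, I would let $f_k$ be whichever of $\Re p_k$ or $\Im p_k$ (restricted to $S^{2m+1}$) has the larger $L^2$ norm. Since $\De_{S^{2m+1}}$ has real coefficients, $f_k$ remains a real eigenfunction with the same eigenvalue $\la_k$. The Pythagorean identity $\|p_k\|_{L^2}^2 = \|\Re p_k\|_{L^2}^2 + \|\Im p_k\|_{L^2}^2$ gives $\|f_k\|_{L^2} \ge \|p_k\|_{L^2}/\sqrt 2$, while trivially $\|f_k\|_{L^\infty} \le \|p_k\|_{L^\infty}$, so
$$\frac{\|f_k\|_{L^\infty(S^{2m+1})}}{\|f_k\|_{L^2(S^{2m+1})}} \;\le\; \sqrt 2 \cdot \frac{\|p_k\|_{L^\infty(S^{2m+1})}}{\|p_k\|_{L^2(S^{2m+1})}}\,,$$
and the right-hand side is uniformly bounded in $k$ by Corollary \ref{corpoly}.

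There is essentially no hard step here: once Corollary \ref{corpoly} is granted, the eigenfunction statement reduces to the classical identification of harmonic homogeneous polynomials on $\R^{2m+2}$ with spherical harmonics on $S^{2m+1}$, together with a trivial real/imaginary part decomposition. The only minor point to verify is that $f_k\not\equiv 0$, which is immediate from $p_k\not\equiv 0$.
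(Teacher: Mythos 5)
Your proof is correct and is essentially the same as the paper's: both identify $p_k$ as a degree-$k$ spherical harmonic on $S^{2m+1}$ (via holomorphic $\Rightarrow$ harmonic) and then take the larger of $\Re p_k$, $\Im p_k$ to get $\|f_k\|_2\ge\|p_k\|_2/\sqrt 2$. You simply spell out the harmonicity and eigenvalue identification more explicitly than the paper does.
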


\begin{proof}[Proof (assuming Corollary \ref{corpoly})] Let $p_k=u_k+iv_k$ be as in Corollary \ref{corpoly}. Then $u_k$ and $v_k$ are (real) eigenfunctions of the Laplacian on the sphere with eigenvalue $\la_k$.  We can choose $p_k$ such that $\|u_k\|_2\ge \|v_k\|_2$ and thus $\|u_k\|_2\ge \|p_k\|/\sqrt 2$. We  then let $f_k= u_k$.
\end{proof}

There are related open problems concerning the growth of $L^\infty$ norms of eigenfunctions on compact Riemannian manifolds.  For example, as far as we are aware, it
remains unknown if there are uniformly bounded sequences $\{f_k\}$ of  eigenfunctions satisfying $\Delta f_k=\la_k f_k$  with unit $L^2$ norms on even-dimensional round spheres.   VanderKam \cite{Va} showed that  random sequences of  eigenfunctions $f_k$ of unit $L^2$ norm on $S^2$ satisfy the growth condition $\|f_k\|_\infty = O\left((\log k)^2\right)$.   We note that Toth and Zelditch \cite{TZ} observed that the only compact Riemannian manifolds with completely integrable geodesic flow and with  eigenvalues of bounded multiplicity and which carry a uniformly bounded orthonormal basis of eigenfunctions are flat. A condition for manifolds to have less than maximal eigenfunction growth is given in \cite{SoZ}.

The author would like to thank Zhiqin Lu, Dror Varolin, Steve Zelditch, and Junyan Zhu for useful suggestions.

\section{Background}\label{background} We review in this section  background on geometry and the \szego kernel from \cite{BSZ,SZ, SZvar, SZZ}.

We let  $(L,h)$ be a positive Hermitian holomorphic line
bundle over a compact complex manifold $M$ of dimension $m$, as in Theorem \ref{main}. If we let $e_L$ denote a nonvanishing
local holomorphic section over an open set $\Omega\subset M$, then the curvature form
of $(L,h)$ is given locally over $\Omega$ by
$$\Theta_h= -\ddbar \log |e_L|_h^2\;.$$ Positivity of $(L,h)$ means that the
curvature $\Theta_h$ is positive, so that $\om:=\frac
i2 \Theta_h$ is a \kahler form on $M$. 

The Hermitian metric $h$ on $L$ induces Hermitian metrics
$h^k$ on the powers $L^k$ of the line bundle, and we give the space $H^0(M,L^k)$ of global holomorphic sections of $L^k$ the
Hermitian inner product
\begin{equation}\label{induced}\langle s_k, s'_k\rangle = \int_M h^k(s_k,
\overline{s'_k}) \,\frac 1{m!}\om^m\;,\qquad s_k, s'_k \in
H^0(M,L^k)\,,\end{equation} induced by the metrics
$h,\om$.
As in \cite{BSZ, SZ, SZZ},  we lift sections $s_k\in H^0(M,L^k)$ to the circle
bundle
$X{\buildrel {\pi}\over \to} M$  of unit vectors in the dual bundle $L\inv\to M$
endowed with the dual metric $h\inv$. Since $(L,h)$ is positive, $X$ is a
strictly pseudoconvex CR manifold.  The lift $\hat s_k:X\to \C$ of the section
$s_k$ is given by
$$\hat{s}_k(\lambda) = \left( \lambda^{\otimes k}, s_k(z)
\right)\,,\quad\la\in \pi\inv(z)\,.$$  The sections $\hat s_k$ span the space 
$\hcal^2_k(X)$ of CR holomorphic functions $\hat s$ on $X$ satisfying $\hat
s(e^{i\theta}x)= e^{ik\theta}\hat s(x)$. This provides  isomorphisms
$$H^0(M,L^k) \approx \hcal^2_k(X)\,,\qquad s_k\mapsto \hat s_k\,.$$
We shall henceforth identify $H^0(M,L^k)$ with $\hcal^2_k(X)$ by identifying $s_k$ with $\hat s_k$.

The {\it \szego projector\/} of level $k$ is the
orthogonal projector
$\Pi_k:L^2(X)\to\hcal^2_k(X)$, which is given by the {\it
\szego kernel}
$$\Pi_k(x,y)=\sum_{j=1}^{d_k} \wh S^k_j(x)\overline{\wh S^k_j(y)}
\qquad (x,y\in X)\;,$$ where $\{\wh S^k_1,\dots,\wh S^k_{d_k}\}$ is an orthonormal basis of $\hcal^2_k(X)$ and $d_k=\dim \hcal^2_k(X)$.
It was shown in \cite{Ca,Ti,Z} (see also \cite{BBS}) that the \szego kernel on the diagonal has the
asymptotics:
\begin{equation}\label{Ze}\Pi_k(x,x) =  \frac {k^m}{\pi^m}
+O(k^{m-1})\,.\end{equation}  We write \begin{equation}\label{Pk}P_k(z,w):=
\frac{|\Pi_k(x,y)|}{\sqrt{\Pi_k(x,x)}
\sqrt{\Pi_k(y,y)}}\,,\quad x\in\pi\inv (z), \ y\in\pi\inv(w)\,.\end{equation} 

We shall apply the following  off-diagonal asymptotics of this normalized \szego kernel:

\begin{prop}\label{DPdecay}  Let $b,\ q\in\R^+$.
Then 
$$ P_k(z,w) =\left\{\begin{array}{ll}   e^{-\frac
k2\,[1+o(1)]\,\dist(z,w)^2}\,,\quad &\mbox{uniformly for }\ \dist(z,w)\le b\,\sqrt{\frac
{\log k}{k}} \\[8pt] O(k^{-q})\,, & \mbox{uniformly for }\ \dist(z,w)\ge\sqrt{(2q+2m+1)\frac
{\log k}{k}}\end{array}\right. \;.$$
\end{prop}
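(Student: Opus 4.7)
My plan is to split the proposition into its two regimes and treat each by invoking the microlocal asymptotic expansion of the \szego kernel. Fix a point $z_0\in M$ and choose Bochner (K\"ahler normal) coordinates centered at $z_0$ in which the K\"ahler potential has the form $\phi(z)=|z|^2+O(|z|^4)$. Via the Boutet de Monvel--Sjöstrand parametrix---whose diagonal specialization is the Catlin--Zelditch expansion \eqref{Ze} cited above---one has the near-diagonal asymptotic
$$\Pi_k(x,y)=\frac{k^m}{\pi^m}\,e^{k\Psi(z,w)}\bigl(1+O(k^{-1})\bigr),\qquad x\in\pi\inv(z),\ y\in\pi\inv(w),$$
where $\Psi(z,w)$ is the holomorphic polarization of $\phi$ (so $\Psi(z,z)=\phi(z)$), and where the Calabi diastasis
$$D(z,w):=\phi(z)+\phi(w)-2\Re\Psi(z,w)=\dist(z,w)^2+O\bigl(\dist(z,w)^3\bigr)$$
agrees with the Riemannian distance squared to leading order at the origin.

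For the near-diagonal regime, I combine the expansion with \eqref{Ze} and the definition \eqref{Pk} to obtain
$$P_k(z,w)=e^{-\frac{k}{2}D(z,w)}\bigl(1+O(k^{-1})\bigr)$$
uniformly for $(z,w)$ in a fixed normal coordinate ball. When $\dist(z,w)\le b\sqrt{\log k/k}$, the cubic remainder contributes
$$k\cdot O\bigl(\dist(z,w)^3\bigr)=O\bigl((\log k)^{3/2}k^{-1/2}\bigr)=o(1),$$
which can be absorbed into an $o(1)\cdot\frac{k}{2}\dist(z,w)^2$ factor, yielding the desired first estimate. Covering $M$ by finitely many Bochner coordinate patches and using compactness converts this into a uniform bound on $M\times M$.

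For the far-diagonal regime, I would apply the global Gaussian upper bound of the \szego kernel
$$|\Pi_k(x,y)|\le C\,k^m\,e^{-\frac{k}{2}\dist(z,w)^2}\quad\text{(for $\dist(z,w)$ bounded),}$$
which follows from the same parametrix construction (or from the results of Ma--Marinescu), together with the rapid decay $|\Pi_k(x,y)|=O(k^{-\infty})$ for $\dist(z,w)$ bounded away from $0$. Combined with \eqref{Ze} in the denominator of \eqref{Pk}, this gives $P_k(z,w)\le C'e^{-\frac{k}{2}\dist(z,w)^2}$ in the relevant range. Plugging in $\dist(z,w)\ge\sqrt{(2q+2m+1)\log k/k}$ yields $P_k(z,w)\le C' k^{-(q+m+\frac12)}=O(k^{-q})$, as required.

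The main technical obstacle is maintaining uniformity of the $[1+o(1)]$ in the first estimate as $z$ ranges over all of $M$ and $w$ ranges over a ball of radius $b\sqrt{\log k/k}$ about $z$; this requires the expansion of $\Pi_k(x,y)$ and the Taylor expansion of the potential $\phi$ to be controlled uniformly in the basepoint of the normal coordinate system. Fortunately, the microlocal construction on a compact manifold provides exactly this uniformity, with remainders that depend only on a finite number of derivatives of the metric.
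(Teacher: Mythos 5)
The paper does not give a self-contained proof of Proposition~\ref{DPdecay}; it derives both estimates by citing Propositions 2.6--2.8 of \cite{SZvar}, with pointers to \cite{BBS, MM, Ch, De, Li, SZ} for variants. Your Boutet de Monvel--Sj\"ostrand/Bochner-coordinate approach via the diastasis expansion is exactly the mechanism underlying those cited results, and your treatment of the near-diagonal estimate is correct in substance (the uniformity point you flag at the end is indeed the only real technical issue there, and it is handled by compactness exactly as you say).

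There is, however, a genuine gap in your off-diagonal argument. You invoke a ``global Gaussian upper bound'' $|\Pi_k(x,y)|\le Ck^m e^{-\frac{k}{2}\dist(z,w)^2}$ valid whenever $\dist(z,w)$ is merely bounded. That inequality with the sharp constant $\tfrac12$ at macroscopic distances is \emph{not} supplied by the parametrix or by Ma--Marinescu. What is available is: (i) the near-diagonal Gaussian $P_k(z,w)=e^{-\frac{k}{2}[1+o(1)]\dist(z,w)^2}$, which the paper notes extends to $\dist(z,w)<k^{-1/3}$ (citing \cite{SZ}); and (ii) the Agmon-type bound $P_k(z,w)=O(e^{-c\sqrt{k}\,\dist(z,w)})$ of \cite{Ch,De,Li,MM}, with $\sqrt{k}$ (not $k$) in the exponent and a geometric constant $c$. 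The Agmon bound cannot stand in for your claimed Gaussian bound at the threshold scale $\dist(z,w)\sim\sqrt{\log k/k}$, since there it only yields $O(e^{-c'\sqrt{\log k}})$, far from $O(k^{-q})$. The correct repair is to patch the two genuine estimates, which overlap: the near-diagonal Gaussian $P_k\le e^{-\frac{k}{2}(1-\epsilon)\dist(z,w)^2}$ handles $\sqrt{(2q+2m+1)\log k/k}\le\dist(z,w)<k^{-1/3}$ (the $+(2m+1)$ padding in the threshold is precisely the slack that lets the degraded constant $\tfrac12(1-\epsilon)$ still produce $O(k^{-q})$), while the Agmon bound gives $P_k=O(k^{-\infty})$ once $\dist(z,w)\ge k^{-1/2+\delta}$ with $0<\delta<1/6$, and $k^{-1/2+\delta}<k^{-1/3}$ for such $\delta$. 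Your single global Gaussian bound should be replaced by this two-regime patching.
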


The estimates in  Proposition \ref{DPdecay} follow from Propositions 2.6--2.8 of \cite{SZvar}. (See also \cite{BBS, MM}. In fact, one has the bound $P_k(z,w)=O(e^{-C\sqrt{k}\,\dist(z,w)})$  \cite{Ch, De, Li}, which is sharper than the above when $\dist(z,w)>k^{-1/2+\epsilon}$, and the first estimate of the proposition holds for $\dist(z,w)<k^{-1/3} $ \cite{SZ}, but the estimates of the proposition suffice for our purposes.)  

We shall apply Proposition \ref{DPdecay} with $q=m+1$.

\section {proof of Theorem \ref{main}} \label{theproof} 

The uniformly bounded sections we construct  are the opposite of peak sections which maximize the sup norm.  We can think of these bounded sections as ``flat sections" since they lack large peaks.  They will be constructed as linear combinations of peak sections centered at ``lattice points" in the following steps: \begin{enumerate}

\item Construct peak sections at lattice-like points; these sections will be approximately orthonormal;

\item modify the sections to be orthonormal;

\item construct a family of linear combinations of these modified sections so they will be orthonormal and uniformly bounded.

\end{enumerate}
\subsection{Step 1: approximately orthonormal peak sections} Here we follow the method in \cite{SZZ} based on the \szego kernel asymptotics of Proposition \ref{DPdecay}.  We repeat the argument here, since we need slightly sharper estimates.

Choose a point $z_0\in M$ and identify $T_{z_0}M$ with $\R^{2m}$. Let $$C_t:=
[-t,t]^{2m}\subset \R^{2m} \equiv T_{z_0}M$$
denote the $2m$-cube of width $2t$ centered at the origin.  Let $\ga>1$ be arbitrary, and choose $t$
sufficiently small so that 
\begin{equation}\label{distance}\ga\inv \|v-w\|\le
\dist(\exp_{z_0} (v), \exp_{z_0}(w)) \le \ga \|v-w\|\,,
\quad\mbox{for }\ v,w\in C_{2t}\;.\end{equation}  For each $k>0$,  we construct a
lattice of points $\{z^k_\nu\}$ in $M$ as follows:  Let
\begin{equation}\label{grid} \Ga_k=\left\{(\nu_1,\dots,\nu_{2m})\in
\Z^{2m}:|\nu_j|\le
\frac {t\,{\sqrt k}}{a}\right\}\,,\end{equation}
where $a$ is to be chosen later.  The number $n_k$ of points in $\Ga_k$ is given by
\begin{equation}\label{n} n_k = \left(2\left\lfloor
\frac {t\,\sqrt k}{a}\right\rfloor+1\right)^{2m} = \left( \frac
{2t}{a}\right)^{2m}k^m+ O(k^{m-1/2})\,.\end{equation} 
It follows from  \eqref{n} and the Riemann-Roch  theorem \eqref{RR} that $ n_k \ge \be\dim H^0(M,L^k)$ for some positive consant $\be$.

We now begin our construction of $n_k$ orthonormal sections in $ H^0(M,L^k)\equiv \hcal^2_k(X)$: We let
\begin{equation}\label{lattice} z^k_\mu = \exp_{z_0}\left( \frac a{\sqrt k}
\,\mu\right)\in \exp_{z_0}(C_t),\quad \mbox {for } \ \mu\in \Ga_k\,.\end{equation}
We choose points
$y^k_\mu\in X$ with
$\pi(y_\mu)= z_\mu$,  where we omit the
superscript $k$ to simplify notation.  We consider the $L^2$-normalized {\it coherent states\/}
\begin{equation} \label{coherent} \Phi^k_\mu(x):=  \frac{\Pi_k(x,y_\mu)}{\sqrt {\Pi_k(y_\mu,y_\mu)}} \qquad (\mu\in\Ga_k)\,,\end{equation}
at the lattice points $z_\mu$.  These sections are ``almost" orthonormal, since
\begin{equation}\label{almostortho} \lla \Phi^k_\mu,\Phi^k_\nu\rra = \frac{\int\Pi_k(x,y_\mu)\Pi_k(y_\nu,x)\,dx}{\sqrt {\Pi_k(y_\mu,y_\mu)}\sqrt {\Pi_k(y_\nu,y_\nu)}}= \frac{\Pi_k(y_\nu,y_\mu)}{\sqrt {\Pi_k(y_\mu,y_\mu)}\sqrt {\Pi_k(y_\nu,y_\nu)}}\,,\end{equation} and thus
\begin{equation}\label{inner} \left|\lla \Phi^k_\mu,\Phi^k_\nu\rra \right| = P_k(z_\mu,z_\nu)\,,\end{equation} which decays rapidly for $\mu\neq\nu$, thanks to Proposition \ref{DPdecay}.

\subsection{Step 2:  orthonormal peak sections}\label{step2}
 The next step is to modify the set $\{\Phi^k_\nu\} $ of coherent states to obtain an orthonormal set $\{\Psi^k_\nu\}$.
As in \cite{SZZ}, we consider the Hermitian $n_k\times  n_k$ matrices $$\De_k = (\De_{\mu\nu}), \qquad
\De_{\mu\nu}=\lla \Phi^k_\mu,\Phi^k_\nu\rra,\qquad  \mbox{for }\ \mu,\nu\in\Ga_k
\,.$$
 We note that by \eqref{almostortho}, the diagonal entries of $\De_k$ are 1.  Since $|\De_{\mu\nu}|=P_k(z_\mu,z_\nu)$, 
Proposition
\ref{DPdecay} (with $q=m+1$) says that
\begin{eqnarray}\label{near} |\De_{\mu\nu}| &\le &
e^{[-1+o(1)] \,\frac k2\,\dist(z_\mu,z_\nu)^2}, \quad  \mbox{if }\ \dist(z_\mu,z_\nu)
\le b\sqrt{\frac {\log k}k}\,, \\[8pt] |\De_{\mu\nu} |&=&O( k^{-m-1}), \qquad  \mbox{if }\
\dist(z_\mu,z_\nu)
\ge b\sqrt{\frac {\log k}k}\label{far}\,,\end{eqnarray}
where $b=\sqrt{4m+3}$.

It was shown in \cite[p.~1987]{SZZ} 
that for all $\eta>0$, we can choose the constant $a$ in \eqref{lattice} such 
that
\begin{equation}\label{almost} \max_{\mu\in\Ga_k}\textstyle\left( \sum_{\nu\in\Ga_k\sm\{\mu\}}| \De_{\mu\nu}|\right) \le  \eta
\,,\quad \mbox{for }\ k\gg 0\,. \end{equation}  We give below a simplified proof of \eqref{almost}, which yields an estimate for $a$:

Fix an element
$\mu_0\in\Ga_k$. By \eqref{n} and 
\eqref{far}, we have
$$ \sum_{\nu\in\Ga_k\sm\{\mu_0\}} |\De_{\mu_0\nu}| = \sum_{\nu\in\Ga_k(\mu_0)} |\De_{\mu_0\nu}| +  O(k\inv) \,,$$ where
$$\Ga_k(\mu_0) =\left\{\nu\in\Ga_k: 0<\dist(z_{\mu_0},z_\nu)
\le \textstyle b\sqrt{\frac {\log k}k}\right\}\,.$$
Let $\ep>0$ be arbitrary.  By \eqref{near},
\begin{equation*}  \sum_{\nu\in\Ga_k(\mu_0)} |\De_{\mu_0\nu}| \le \sum_{\nu\in\Ga_k(\mu_0)}
e^{-\frac k2\,(1-\ep) \,\dist(z_{\mu_0},z_\nu)^2},\quad \mbox{for }\ k\gg 0\,.\end{equation*}
Now let $a'=a/\ga$, and let $\tilde a = a'\sqrt{1-\ep}$.
Since $\dist(z_\nu,z_{\mu_0})>\frac {a'}{\sqrt k}\|\nu-{\mu_0}\|$, we then have by \eqref{near}
\begin{eqnarray*}\sum_{\nu\in\Ga_k(\mu_0)}|\De_{\mu_0\nu}|& \le &\sum_{\nu\in\Ga_k(\mu_0)}  e^{-\tilde a^2\|\nu-\mu_0\|^2/2}
\ =\  \sum_{\nu\in\Z^m\sm{0}} e^{-\tilde a^2\|\nu\|^2/2} = \left[ \sum_{j=-\infty}^\infty e^{-\tilde a^2j^2/2}\right]^{2m}-1\\ &< & 
\left[ 1+\int_{-\infty}^\infty e^{-\tilde a^2x^2/2}\,dx\right]^{2m}-1   = \left(1+\tilde a\inv\sqrt{2\pi}\right)^{2m}-1\,,
\end{eqnarray*} for $k\gg 0$. Thus   \eqref{almost} holds whenever
\begin{equation}\label{atilde} \left(1+ \tilde a\inv\sqrt{2\pi}\right)^{2m}\le  1+\eta\,.\end{equation}

Recall that the $\ell^\infty\to\ell^\infty$ mapping norm of  a linear map $A\in\mbox{Hom}(\C^n,\C^n)$ is given by  \begin{equation}\label{mapnorm1}\|A\|_{\ell^\infty\to\ell^\infty} = \sup\{\|Av\|_{\ell^\infty}: v\in\C^n,\, \|v\|_{\ell^\infty}=1\}
=\max_{1\le \mu\le n}\sum_{\nu=1}^n|A_{\mu\nu}|\,,\end{equation}
where $\|v\|_{\ell^\infty} = \max_{1\le \mu\le n}|v_\mu|$. 
By \eqref{almost}--\eqref{mapnorm1}, we have:

\begin{lem}\label{mapnorm}Let $\De_k=I-A_k$. Suppose that 
\begin{equation}\label{abound} a >  \frac{\ga\,\sqrt{2\pi}}{(1+\eta)^{1/2m}-1}\,,\end{equation} where $\ga$ satisfies the distortion bound \eqref{distance}. Then  \begin{equation}\label{mapnormA} \limsup_{k\to\infty}\|A_k\|_{\ell^\infty\to\ell^\infty} <\eta\,.\end{equation} \end{lem}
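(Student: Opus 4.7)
The plan is to identify the $\ell^\infty\to\ell^\infty$ mapping norm of $A_k$ with the quantity already controlled in \eqref{almost}, and then argue that the strict inequality in the hypothesis \eqref{abound} yields the strict inequality in the conclusion \eqref{mapnormA}. First I would combine the formula \eqref{mapnorm1} with the fact that $\De_{\mu\mu}=1$ (immediate from \eqref{almostortho}), so that $A_k=I-\De_k$ has vanishing diagonal and
$$\|A_k\|_{\ell^\infty\to\ell^\infty} = \max_{\mu\in\Ga_k}\sum_{\nu\in\Ga_k}|(A_k)_{\mu\nu}| = \max_{\mu\in\Ga_k}\sum_{\nu\ne\mu}|\De_{\mu\nu}|.$$
This is precisely the quantity whose upper bound is the content of \eqref{almost}.

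Next I would unpack the sufficient condition \eqref{atilde} used to derive \eqref{almost}: it requires $\tilde a \ge \sqrt{2\pi}/[(1+\eta)^{1/2m}-1]$, where $\tilde a = (a/\ga)\sqrt{1-\ep}$ and $\ep>0$ is at our disposal. Set $F(\eta) := \sqrt{2\pi}/[(1+\eta)^{1/2m}-1]$; the hypothesis \eqref{abound} is exactly $a/\ga > F(\eta)$, and $F$ is continuous and strictly decreasing on $(0,\infty)$. I would therefore choose $\eta'\in (0,\eta)$ with $a/\ga > F(\eta')$, and then pick $\ep>0$ small enough that $\tilde a = (a/\ga)\sqrt{1-\ep}\ge F(\eta')$ still holds. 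With these choices, the derivation that produced \eqref{almost} goes through verbatim with $\eta'$ in place of $\eta$, yielding $\|A_k\|_{\ell^\infty\to\ell^\infty}\le \eta'$ for all $k\gg 0$, and hence $\limsup_{k\to\infty}\|A_k\|_{\ell^\infty\to\ell^\infty}\le \eta' < \eta$, as required.

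There is no serious obstacle here: the lemma amounts to a reformulation of the per-row estimate \eqref{almost} in matrix-norm language, with the strict inequality in \eqref{abound} supplying exactly the slack needed to convert the non-strict sufficient condition \eqref{atilde} into the strict conclusion \eqref{mapnormA}. The only bookkeeping care is to track how the distortion constant $\ga$ from \eqref{distance} and the auxiliary $\ep$ feed through the substitutions $a'=a/\ga$ and $\tilde a=a'\sqrt{1-\ep}$, and to invoke the monotonicity of $F$ to pass from $\eta$ to a strictly smaller $\eta'$.
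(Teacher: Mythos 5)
Your proposal is correct and matches the paper's (essentially implicit) argument: the lemma is stated as an immediate consequence of the preceding derivation of \eqref{almost} together with the mapping-norm formula \eqref{mapnorm1} and the fact that $A_k=I-\De_k$ has zero diagonal. Your extra care in introducing $\eta'<\eta$ and choosing $\ep$ to preserve $\tilde a\ge F(\eta')$ is exactly the right way to convert the strict hypothesis \eqref{abound} into the strict conclusion $\limsup\|A_k\|_{\ell^\infty\to\ell^\infty}<\eta$, which the paper leaves to the reader.
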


In the following, we let $0<\eta<1$, and we let $a$ satisfy \eqref{abound}, so that $\|A_k\|_{\ell^\infty\to\ell^\infty}\le\eta<1$ for $k\gg 0$.  It follows that the eigenvalues of $\De_k$ are bounded below by $1-\eta$, and therefore
$\De_k$ is invertible for $k\gg 0$.   From the Taylor series
$$(1-x)^{-1/2} = \sum_{j=0}^\infty\frac {(2j)!}{4^j\,j!^2}\,x^j,$$ it follows that  the (positive definite Hermitian) square root of $\De_k\inv$ is given by
\begin{equation}\label{series} \De_k^{-1/2}=I+ \sum_{j=1}^\infty\frac {(2j)!}{4^j\,j!^2}\,A_k^j\,,\end{equation} where the series converges in the $\ell^\infty\to\ell^\infty$ mapping norm, for $k\gg 0$. Furthermore by \eqref{mapnormA},
\begin{eqnarray}\label{bound2}\|\De_k^{-1/2}\|_{\ell^\infty\to\ell^\infty} &\le & 1+ \sum_{j=1}^\infty\frac {(2j)!}{4^j\,j!^2}\,\|A_k^j\|_{\ell^\infty\to\ell^\infty}\ \le\ \sum_{j=0}^\infty\frac {(2j)!}{4^j\,j!^2}\,\|A_k\|^j_{\ell^\infty\to\ell^\infty}\notag\\& =& \left(1-\|A_k\|_{\ell^\infty\to\ell^\infty}\right)^{-1/2}\ \le\ (1-\eta)^{-1/2}\,, \quad \mbox{for }\ k\gg 0\,.\label{mapnormB}\end{eqnarray}

We write $\De_k^{-1/2}=B_k=(B_{\mu\nu})$. To complete Step 2, we define the ``quasi-coherent states" \begin{equation}\label{ortho} \Psi^k_\mu = \sum_{\nu\in\Ga_k} B_{\mu\nu}\Phi^k_\nu \in H^0(M,L^k)\,,\quad \mbox{for } \mu\in\Ga_k.\end{equation}
By definition, $$\langle \Psi_{\mu}^k, \Psi_{\nu}^k\rangle = \sum_{\rho,\sigma\in\Ga_k} B_{\mu\rho}\bar B_{\nu\sigma}\De_{\rho\sigma}= \de_\mu^\nu\,,$$ and thus the $\Psi_{\mu}^k$ are orthonormal.

\subsection{Step 3:  orthonormal flat sections}\label{step3} Our orthonormal uniformly bounded (``flat") sections $\{\s_j^k\}$ are easily constructed from our orthonormal peak sections $\Psi_{\mu}^k$:  Let $\zeta=e^{2\pi i/{n_k}}$ be a primitive $(n_k)$-th root of unity, and let $\tau^1, \tau^2,\dots,\tau^{n_k}$ be the lexicographic (or any other) ordering of the elements of $\Ga_k$. We let 
\begin{equation}\label{orthoflat} s_j^k= \frac 1{\sqrt{n_k}}\sum_{q=1}^{n_k} \zeta^{qj}\Psi_{\tau^q}^k\,,\qquad 1\le j\le n_k\,.\end{equation}
Since the $\Psi_{\tau^q}^k$ are orthonormal, $$\langle s_j^k, s_l^k\rangle = \frac 1{n_k} \sum_{q=1}^{n_k} \zeta^{(j-l)q} = \de_l^j\,,\qquad \mbox{for }\ 1\le j,l\le n_k\,,$$ and thus $\{s_j^k\}$ is an orthonormal family.
To verify that the $s_j^k$ are uniformly bounded, we consider the linear maps
$$F_k:\C^{n_k}\to \hcal^2_k(X)\,,\qquad (v_1,\dots,v_{n_k})\mapsto \sum_{j=1}^{n_k} v_j\,\Phi^k_{\tau^j}\,,$$
with the $\ell^\infty\to L^\infty(X)$ mapping norm 
\begin{eqnarray*}\|F_k\|_{\ell^\infty\to L^\infty(X)} =
\sup \left\{\sup_{x\in X}\left| \!\!\sum_{\ \mu\in\Ga_k} v_\mu\,\Phi^k_\mu(x)\right|:  |v_\mu|\le 1,\ \mbox{for }\ \mu\in\Ga_k \right\}=\sup_{x\in X}\sum_{\mu\in\Ga_k} |\Phi^k_\mu(x)|\,.\end{eqnarray*}

\begin{lem}\label{mapnormF} \ 
$\|F_k\|_{\ell^\infty\to L^\infty(X)} \le c\,{k^{m/2}}\,, 
$ for some constant $c<+\infty$.\end{lem}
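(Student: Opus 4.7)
The plan is to reduce the lemma to a uniform bound on $\sum_{\mu\in\Ga_k}P_k(z,z_\mu)$ and then exploit the Gaussian off-diagonal behavior of the normalized \szego kernel to compare that sum to a theta-type expression.

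First I would use the identity
$$|\Phi^k_\mu(x)|=\frac{|\Pi_k(x,y_\mu)|}{\sqrt{\Pi_k(y_\mu,y_\mu)}}=P_k(\pi(x),z_\mu)\,\sqrt{\Pi_k(x,x)},$$
which follows directly from \eqref{Pk}. The diagonal expansion \eqref{Ze} gives $\sqrt{\Pi_k(x,x)}\le C_0\,k^{m/2}$ uniformly in $x\in X$ for $k\gg 0$, so
$$\|F_k\|_{\ell^\infty\to L^\infty(X)}=\sup_{x\in X}\sum_{\mu\in\Ga_k}|\Phi^k_\mu(x)|\le C_0\,k^{m/2}\sup_{z\in M}\sum_{\mu\in\Ga_k}P_k(z,z_\mu).$$
It therefore suffices to prove $\sup_{z\in M}\sum_{\mu\in\Ga_k}P_k(z,z_\mu)\le C_1$ for $k\gg 0$.

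Next I would split each sum at the scale $R_k:=b\sqrt{(\log k)/k}$, with $b=\sqrt{4m+3}$. By Proposition~\ref{DPdecay} with $q=m+1$, every term with $\dist(z,z_\mu)\ge R_k$ is $O(k^{-m-1})$, and by \eqref{n} there are at most $|\Ga_k|=O(k^m)$ of them, so the far contribution is $O(k^{-1})$ uniformly in $z$. For the remaining indices $\mu$ with $\dist(z,z_\mu)<R_k$, Proposition~\ref{DPdecay} yields $P_k(z,z_\mu)\le e^{-(1-\ep)k\,\dist(z,z_\mu)^2/2}$ for $k\gg 0$. If $z$ lies outside a fixed compact neighborhood of $\exp_{z_0}(C_t)$, this near index set is empty for large $k$ and we are done. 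Otherwise I would pass to normal coordinates, writing $v_\mu=a\mu/\sqrt k$ and $v=\exp_{z_0}^{-1}(z)$, and invoke the distortion bound \eqref{distance} to get $\dist(z,z_\mu)\ge\ga^{-1}\|v-v_\mu\|$. Setting $u:=(\sqrt k/a)\,v$ and $A:=\tfrac12(1-\ep)a^2\ga^{-2}$, the near part is dominated by
$$\sum_{\mu\in\Ga_k}e^{-A\|u-\mu\|^2}\;\le\;\sum_{\mu\in\Z^{2m}}e^{-A\|u-\mu\|^2},$$
which is a continuous $\Z^{2m}$-periodic function of $u\in\R^{2m}$ and is therefore uniformly bounded.

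The main (mild) obstacle is ensuring that the distortion inequality \eqref{distance} applies to every near lattice point $z_\mu$ as $z$ ranges over a fixed neighborhood of $\exp_{z_0}(C_t)$. This can be arranged by shrinking $t$ so that an $R_k$-neighborhood of $\exp_{z_0}(C_t)$ sits inside $\exp_{z_0}(C_{2t})$ once $k$ is large; outside this enlarged neighborhood all lattice distances are bounded below and the far estimate alone suffices. Combining the near and far contributions then gives $\sup_z\sum_\mu P_k(z,z_\mu)\le C_1$ for $k\gg 0$, completing the proof of the lemma.
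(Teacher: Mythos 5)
Your proof is correct and follows essentially the same outline as the paper's: extract the $k^{m/2}$ factor via \eqref{Ze} and \eqref{Pk}, split at the scale $b\sqrt{(\log k)/k}$, kill the far part with Proposition~\ref{DPdecay} ($q=m+1$) and the $O(k^m)$ count from \eqref{n}, and dominate the near part by a Gaussian sum over the rescaled lattice using \eqref{distance}. The one genuine point of divergence is the last step. The paper bounds $\sum_{\mu\in\Z^{2m}}e^{-\tilde a^2\|\mu-p^k\|^2/2}$ by its value at $p^k=0$ via Poisson summation (the periodized Gaussian has positive Fourier coefficients, so its maximum is at $0$), and then estimates $\bigl(1+\tilde a^{-1}\sqrt{2\pi}\bigr)^{2m}$ by comparison with an integral. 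You instead observe that $u\mapsto\sum_{\mu\in\Z^{2m}}e^{-A\|u-\mu\|^2}$ is continuous and $\Z^{2m}$-periodic, hence bounded. Your route is shorter and entirely elementary; the paper's is slightly heavier but produces the explicit numerical bound that is reused in the quantitative statement of Theorem~\ref{beta}, so it earns its keep later even though the qualitative lemma doesn't need it. One small remark: your ``shrink $t$'' device is unnecessary. Since the lattice points lie in $\exp_{z_0}(C_t)$ and $R_k\to0$, any $z$ with a near lattice point automatically lies in $\exp_{z_0}(C_{2t})$ once $k$ is large, which is exactly the paper's Case~1/Case~2 split; nothing about $t$ has to change.
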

\begin{proof}
Let $x\in X$ be arbitrary, and let $z=\pi(x)\in M$.  Then by \eqref{Ze}, \begin{equation}\label{Phik}\Phi^k_{\mu}(x) =  \Pi_k(x,x)^{1/2}P_k(z,z_\mu)= \left(\pi^{-m/2}+o(1)\right)k^{m/2}\,P_k(z,z_\mu). \end{equation}

We consider two cases:

{\it Case 1: $z\notin \exp_{z_0}(C_{2t})$.}  Then $d(z,z^k_\mu)\ge \dist(z,C_{t})\ge t/\ga$ for all $\mu$ and $k$, so by  
\eqref{far}, $P_k(z,z^k_\mu)=O(k^{-m-1})$ and hence by \eqref{n} and \eqref{Phik}, $\sum_\mu|\Phi^k_{\mu}(x)|=o(k^{m/2})$ uniformly for $x\in X\sm \pi\inv\left(\exp_{z_0}(C_{2t})\right)$.

{\it Case 2: $z=\exp_{z_0}(p)$, where $ p\in C_{2t}$.} As before, we suppose that $\ep>0$ and we let $a'=a/\ga$, $\tilde a = a'\sqrt{1-\ep}$. As in the proof of \eqref{almost}, $$\sum_{\mu\in\Ga_k} P_k(z,z_\mu) \le \sum\left\{
e^{-\frac k2\,(1-\ep) \,\dist(z,z_\mu)^2}:  \dist(z,z_\mu)
\le \textstyle b\sqrt{\frac {\log k}k}\right\}  +O(k^{-1})\,.$$
By \eqref{distance}, $$\dist(z,z_\mu) \ge \ga\inv\left\|p-\frac {a}{{\sqrt k}}\mu\right\| = \frac {a'}{\sqrt k}\,\|p^k-\mu\|\,, \qquad p^k=\frac{\sqrt k} a\, p\,.$$
Therefore \begin{multline*} \sum_{\mu\in\Ga_k}e^{-\frac k2\,(1-\ep)\,\dist(z,z_\mu)^2}  \le \sum_{\mu\in\Z^{2m}} e^{-\tilde a^2\|\mu-p^k\|^2/2} \le  \sum_{\mu\in\Z^{2m}} e^{-\tilde a^2\|\mu\|^2/2} \\ <
\left[ 1+\int_{-\infty}^\infty e^{-\tilde a^2x^2/2}\,dx\right]^{2m}   = \left(1+\tilde a\inv\sqrt{2\pi}\right)^{2m}\,,
\end{multline*} where the second inequality is by the Poisson summation formula applied to the function $f(x)=e^{-\tilde a^2x^2/2}$. 
The estimate of the lemma then follows from \eqref{Phik}.\end{proof}

\begin{proof}[Completion of the proof of Theorem \ref{main}]It remains to show that the $s^k_j$ are uniformly bounded.  Combining \eqref{ortho}--\eqref{orthoflat}, we have
$$s_j^k = \frac 1{\sqrt{n_k}}\sum_{q=1}^{n_k} \sum_{\nu\in\Ga_k}\zeta^{qj} B_{\tau^q\nu}\Phi^k_\nu\,.$$ Fix $j$ and let 
$$v_\nu= \sum_{q=1}^{n_k}\zeta^{qj}  B_{\tau^q\nu}\,, \quad\mbox{for } \nu\in\Ga_k.$$
By Lemma \ref{mapnorm} and  \eqref{mapnormB}, \begin{equation}\label{vnu}|v_\nu|\le \sum_{q=1}^{n_k} |B_{\tau^q\nu}| = \sum_{q=1}^{n_k} |B_{\nu\tau^q}|= \|\De_k^{-1/2}\|_{\ell^\infty\to\ell^\infty} \le (1-\eta)^{-1/2}\,,\end{equation} where $\eta=(1+\tilde a\inv\sqrt{2\pi})^m-1$. Thus by Lemma \ref{mapnormF} and \eqref{vnu}, $$\|s_j^k\|_{L^\infty(X)} = \frac 1{\sqrt {n_k}} \left\| \sum_{\nu\in\Ga_k}\Phi^k_\nu\,v_\nu\right\|_{L^\infty(X)} \le \frac 1{\sqrt {n_k}} \|F_k\|_{\ell^\infty\to L^\infty(X)}\;\|B_k\|_{\ell^\infty\to\ell^\infty} \le \frac {c}{\sqrt{1-\eta}}\,\frac {k^{m/2}}{\sqrt {n_k}}\,,$$ for $k$ sufficiently large. By \eqref{n}, $n_k\ge \be' k^m$ for a positive constant $\be'$, and therefore the  $s_j^k$ are uniformly bounded above.
\end{proof}

\section{Universal bounds}

In this section, we modify the above argument to obtain a  universal value (depending  only on the dimension of $M$) of the fraction $\be$ in Theorem \ref{main}.  We also give a universal bound for  the asymptotic sup norms of the  orthonormal sections:

\begin{theo} \label{beta} There exist positive constants $\be_m$  depending only on $m\in\Z^+$ such that if   $(L,h)\to (M,\om)$ is as in Theorem \ref{main}, with  $\dim M=m$, then for all $\be< \be_m$, there exist sets of orthonormal holomorphic sections
$$s^k_1,\dots,s^k_{n_k} \in H^0(M,L^k)\,,\qquad  n_k \ge \be\,\dim H^0(M,L^k)\,,$$
such that the family $\{s^k_j:1\le j\le n_k,\,k\in \Z^+\}$ is uniformly bounded. 

Furthermore,  there exist constants $\kappa_m(\beta)$, depending only on $m$ and $\be$, such that  the $s_j^k$ can be chosen to satisfy the universal asymptotic $L^\infty$ bound \begin{equation}\label{flat}\limsup_{k\to\infty}\left[ \max_{1\le j\le n_k}\|s^k_j\|_\infty\right] \le \kappa_m(\beta)\,\vol(M)^{-1/2}\,.\end{equation}
\end{theo}

\begin{proof} Let $a_m,\be_m\in \R^+$ be given by \begin{equation}\label{sigma}\sum_{j=-\infty}^{+\infty} e^{-a_m^2\,j^2/2}=2^{1/2m}\,,\qquad \be_m= {\pi^m}/{a_m^{2m}}\,.\end{equation}  Suppose that $\be<\be_m$, and choose $a>a_m$ such that  $\be<{\pi^m}/{ a^{2m}}$.  Then choose $\tilde a>a_m$ and $\ga>1$ such that $a_m<\tilde a <a/{\ga}$. We decompose $M$ into a finite number of disjoint domains $\{U_j\}_{1\le j\le q}$ with piecewise smooth boundaries  such that $M=\bigcup_{j=1}^q\overline U_j$ and that there exist points $p_j\in U_j$ and open sets $W_j\subset T_{p_j}M\equiv \R^{2m}$ such that  $\exp_{p_j}(W_j)=U_j$,
\begin{equation}\label{dist}\ga\inv \|v-w\|\le
\dist(\exp_{p_j} (v), \exp_{p_j}(w)) \le \ga \|v-w\|\,,
\quad\mbox{for }\ v,w\in W_j\;,\end{equation} and writing $\exp^*_{p_j}(\frac 1{m!}\om^m) = g_{(j)}\,dx_1\wedge\cdots\wedge dx_{2m}$, 
\begin{equation}\label{estvol} 0 < g_{(j)}(v)\le 1 +\frac\de{\vol(M)}\,,\quad\mbox{for }\ v\in W_j\,,\end{equation}
where $\de>0$ is to be chosen later.

Choose smooth  domains $U''_j\subset\!\subset U'_j\subset\!\subset U_j$ such that $\vol(M\sm\bigcup U''_j)<\de$, and let $W_j''=\exp_{p_j}\inv(U_j'')$. By \eqref{estvol}, $$\vol(U_j'') = \int_{W_j''} g_{(j)}\,dx_1\wedge\cdots\wedge dx_{2m}
\le \left(1 +\frac\de{\vol(M)}\right)\vol(W_j'')\,,$$ and therefore
\begin{equation}\label{volbound}\sum_{j=1}^q \vol(W_j'') \ge \left(1 -\frac\de{\vol(M)}\right)\sum_{j=1}^q \vol(U_j'') \ge \vol(M)-2\,\de\,.\end{equation}

For $k\in\Z^+$, we write
\begin{equation}\label{latticej} z^k_{\mu j} = \exp_{p_j}\left( \frac a{\sqrt k}
\,\mu\right)\quad \mbox {for } \ \mu\in \Z^{2m}\,,\ 1\le j\le q\,,\end{equation}
and we let  $$\Ga_{kj}= \left\{\mu\in\Z^{2m}:  \frac a{\sqrt k}\,\mu \in W_j''\right\} 
=\{\mu\in\Z^{2m}:z^k_{\mu j}\in U_j''\}\,.$$ We thus obtain a collection of ``lattice points" $\{z_{\mu j}^k:\mu\in \Ga_{kj},1\le j\le q\}$ throughout $M$.
As before, we choose points
$y^k_{\mu j}\in X$ with
$\pi(y^k_{\mu j})= z^k_{\mu j}$, and we
consider the family of coherent states
$$\Phi^k_{\mu j}(x):= \frac{\Pi_k(x,y^k_{\mu j})}{ {\Pi_k(y^k_{\mu j},y^k_{\mu j})}^{1/2}}\in \hcal^2_k(X)\,,\qquad \mu\in\Ga_{kj},\ j=1,\dots,q\,,$$ at these lattice points.  It follows from \eqref{volbound} that the number $n_k$ of lattice points satisfies the inequality \begin{equation}\label{nk}n_k = \sum_{j=1}^q \# (\Ga_{kj})=\frac {k^m}{a^{2m}}\,\sum_{j=1}^q\vol(W_j'') +O(k^{m-1/2}) > 
\frac {\vol(M)-3\de}{a^{2m}}\,k^m \,, \quad \mbox{for } k\gg 0\,.\end{equation}

To show that $n_k$ satisfies the lower bound of the theorem, we recall that the volume of $M$ is given by \begin{equation}\label{volume} \vol(M)=\int_M\frac 1{m!}\om^m = \frac 1{m!}\int_M [\pi\,c_1(L,h)]^m = \frac{\pi^m}{m!}c_1(L)^m\,.\end{equation}
Let $\ep= \pi^m/a^{2m}-\be>0$. Then by  \eqref{RR} and \eqref{nk},
$$\frac {n_k}{\dim H^0(M,L^k)} > \frac {m!\,[\vol(M)-3\de]}{c_1(L)^m\,a^{2m}} =\frac{\pi^m-3\,m!\,\de/c_1(L)^m}{a^{2m}}= \be+\ep - \frac{3\,m!}{a^{2m}\,c_1(L)^m}\,\de\,,$$ for $k\gg 0$.  Choosing $\de \le  \frac 1{3\,m!}a^{2m}c_1(L)^m\ep$, we obtain the desired bound \begin{equation}\label{>}n_k > \be\,\dim H^0(M,L^k)\,.\end{equation}

We now construct $n_k$ orthonormal sections of $\hcal^2_k(X)=H^0(M,L^k)$ for $k$ sufficiently large. Following the approach of Section~\ref{step2}, we define the Hermitian $n_k\times  n_k$ matrices $$\De_k = (\De_{(\mu j)(\nu l)}), \qquad
\De_{(\mu j)(\nu l)}=\lla \Phi^k_{\mu j},\Phi^k_{\nu l}\rra,\qquad  \mbox{for }\ \mu\in\Ga_{kj},  \ \nu\in \Ga_{kl},\ 1\le j,l\le q
\,.$$
Recalling that $\tilde a>a_m$, we let $$\eta=\left(\sum_{j=-\infty}^{+\infty} e^{-\tilde a^2\,j^2/2}\right)^{2m}-1 <1\,.$$
Fix $\mu_0,j_0$, with $\mu_0\in\Ga_{kj_0}$. Since
$\tilde a <a/\ga$, we see by the argument in Section~\ref{step2} that  
$$ \sum_{(\mu,j)\neq(\mu_0,j_0)}\!\! \De_{(\mu_0 j_0)(\mu j)} \le \sum_{\nu\in\Z^m\sm{0}} e^{-\tilde a^2\|\nu\|^2/2}+O(k^{-1}) = \eta +O(k^{-1}).$$  Therefore, for $k$ sufficiently large, $\De_k$ is invertible and we have
$$\|\De_k^{-1/2}\|_{\ell^\infty\to\ell^\infty} \le (1-\eta)^{-1/2} +O(k^{-1})\,.$$ We can then construct as before the orthonormal family
\begin{equation}\label{ortho2} \Psi^k_{\mu j} = \sum_{\nu l} [\De_k^{-1/2}]_{(\mu j)(\nu l)}\Phi^k_{\nu l} \in \hcal^2_k(X)\,, \quad\mbox{for }\ k\gg0\,.\end{equation}

We let $\tau^1,\tau^2,\dots,\tau^{n_k}$ be an (arbitrary) ordering of the indices $(\mu j)$.
As in Section \ref{step3}, we define the orthonormal sections 
\begin{equation}\label{same} s_j^k= \frac 1{\sqrt{n_k}}\sum_{q=1}^{n_k} \zeta^{qj}\Psi_{\tau^q}^k\,,\quad 1\le j\le n_k\,.\end{equation}
and we consider the linear maps
$$F_k:\C^{n_k}\to \hcal^2_k(X)\,,\qquad (v_1,\dots,v_n)\mapsto \sum_{j=1}^{n_k} v_j\,\Phi^k_{\tau^j}\,.$$
By the proof of Lemma \ref{mapnormF} (with  $C_{2t}$ replaced with $\bigcup U_j'$), we conclude that 
\begin{equation*} \|F_k\|_{\ell^\infty\to L^\infty(X)} \le (1+\eta+o(1))\,\frac{k^{m/2}}{\pi^{m/2}}\,. 
\end{equation*} for $k$ sufficiently large.  It then follows as before that $$\|s_j^k\|_{L^\infty(X)} \le  \frac 1{\sqrt {n_k}}\|F_k\|_{\ell^\infty\to L^\infty(X)}\;\|\De^{-1/2}_k\|_{\ell^\infty\to\ell^\infty} \le \frac {1+\eta+o(1)}{\pi^{m/2}\sqrt{1-\eta}}\,\frac {k^{m/2}}{\sqrt{n_k}}
\,,$$ for $k$ sufficiently large. Since $\dim H^0(M,L^k)=\frac 1{\pi^m}\vol(M)[k^m+o(k^{m-1})]$, it follows from \eqref{>} that
$$n_k>\frac\beta{\pi^m}\,\vol(M)\,k^m\,,\quad \mbox{for }\ k\gg 0\,,$$ and therefore
\begin{equation}\label{flatb}\|s_j^k\|_{L^\infty(X)} < \frac{1+\eta}{\sqrt{\beta(1-\eta)}}\,\vol(M)^{-1/2}\,,\quad \mbox{for }\ k\gg 0\,.\end{equation}

\end{proof}

\begin{rem} The bound in \eqref{flatb} depends on the choice of $\tilde a<a/\ga<a<\pi^{1/2}/\be^{1/2m}$.  However, if one chooses a sequence $\tilde a(\nu)\nearrow \pi^{1/2}/\be^{1/2m}$,  then from the resulting sequences $\{s^k_j(\nu)\}$  we can construct $\{s^k_j\}$  satisfying \eqref{flatb} with 
$$\eta=\left[\sum_{j=-\infty}^{+\infty}\exp\left(-\frac{\pi j^2}{2\,\be^{1/m}}\right)\right]^{2m}-1\,.$$

\end{rem}

\subsection{Numerical values for $\be_m$.}\label{numerical} Solving
\eqref{sigma} numerically using {\it Maple 18\/}, we obtain the following values (to 5 decimal places):
$\be_1 \approx .99220$,
$\be_2\approx .44342$,
$\be_3\approx .17782$,
$\be_4 \approx .06630$,
$\be_5 \approx .02345$,
$\be_6\approx .00796$.

However, one obtains larger values of the constants $\be_m$ in Theorem~\ref{beta} by using the lattice points 
\begin{equation}\label{lattice2} z^k_{\mu j} = \exp_{p_j}\left[ \frac a{\sqrt k}
\,(\mu_1+e^{i\pi/3}\mu_2,\; \mu_3+e^{i\pi/3}\mu_4,\;\dots,\mu_{2m-1}+e^{i\pi/3}\mu_{2m})\right],\end{equation}  for $\mu=(\mu_1,\dots,\mu_{2m})\in \Z^{2m}$
(where we identify $T_{p_j}M\equiv R^{2m}\equiv\C^m$),  instead of the points of the lattice \eqref{latticej}.  In place of \eqref{nk}, we have
\begin{equation*}n_k >\left( \frac 2{\sqrt 3}\right)^m\;
\frac {\vol(M)-3\de}{a^{2m}}\,k^m \,, \quad \mbox{for } k\gg 0\,.\end{equation*}
We note that $|\mu_1+e^{i\pi/3}\mu_2|^2=\mu_1^2+\mu_2^2+\mu_1\mu_2$; then in place of \eqref{sigma}, we let $\al_m,\be'_m\in\R^+$ be given by 
\begin{equation}\label{bettersigma}\sum_{\mu\in\Z^2} e^{-\al_m^2(\mu_1^2+\mu_2^2+\mu_1\mu_2)/2}=2^{1/m}\,,\qquad \be'_m= \left(\frac{2}{\sqrt{3}}\right)^m\,\frac {\pi^m}{\al_m^{2m}} = \left(\frac{2\pi}{\sqrt{3}\,\al_m^2}\right)^m\,.\end{equation}

Let $\be<\be'_m$, and choose $\ga,\al,\tilde \al$ such that  $\al_m<\tilde \al<\al/{\ga}$ and $\be<\left(\frac{2\pi}{\sqrt{3}\,\al^2}\right)^m$. Repeating the proof of Theorem \ref{beta}, we  obtain the estimate $$ \sum_{(\mu,j)\neq(\mu_0,j_0)}\!\! \De_{(\mu_0 j_0)(\mu j)} \le \left[\sum_{\mu\in\Z^2} e^{-\tilde \al^2(\mu_1^2+\mu_2^2+\mu_1\mu_2)/2}\right]^m-1+O(k^{-1})\,.$$
We then conclude that
$$\frac {n_k}{\dim H^0(M,L^k)} > \left(\frac 2{\sqrt 3}\right)^m\,\frac {\vol(M)-3\de}{c_1(L)^m\,\al^2} = \left(\frac 2{\sqrt 3}\right)^m\,\frac{\pi^m-3\,\de/c_1(L)^m}{\al^{2m}}>\be\,,$$ for $k\gg 0$, if $\de$ is chosen small enough.  Thus Theorem~\ref{main} holds for all $\be$ less than the value of $\be'_m$ given by \eqref{bettersigma}. Solving \eqref{bettersigma} numerically  using {\it Maple 18\/}, one obtains the better values (to 5 decimal places):
$\be'_1 \approx  .99564$,
$\be'_2 \approx   .45867$,
$\be'_3 \approx   .19254$,
$\be'_4 \approx   .07572$,
$\be'_5 \approx   .02838$,
$\be'_6 \approx    .01024$.
In dimension 1, the lattice \eqref{lattice2} appears to give the best value of $\be_1$ (compared with other lattices), but  this lattice is probably not optimal for $m\ge 2$.
An open question is whether the result can be improved further by using coherent states at other collections of points, e.g., Fekete points.

\end{document}